\def\R#1{\ensuremath{\mathbb{R}^{#1}}}
\def\S#1{\ensuremath{\mathbb{S}^{#1}}}
\def\<{\left<}
\def\>{\right>}
\def\Xfrak{\mathfrak{X}}
\DeclareMathOperator{\Con}{Con}
\def\bfn{\mathbf{n}}
\def\norma#1{|\kern-.3ex|#1|\kern-.3ex|}
\def\e{\varepsilon}
\def\Y{Z}
\DeclareMathOperator{\arccot}{arccot}
\def\tgamma{\widetilde{\gamma}}
\begin{document}
\hinfo{XX}{x}{XXXX}{1}{\lpage}{10.15672/hujms.xx}{Article Type}
%

\markboth{P. Lucas, J.A. Ortega-Yagües}{Concircular helices and concircular surfaces in Euclidean 3-space $\R3$}

\title{Concircular helices and concircular surfaces in Euclidean 3-space $\R3$}

\author{Pascual Lucas\coraut$^{1}$, José-Antonio Ortega-Yagües$^1$}

\address{$^1$Departamento de Matemáticas, Universidad de Murcia\\
       Campus de Espinardo, 30100 Murcia SPAIN}
\emails{plucas@um.es (P. Lucas), yagues1974@hotmail.com (J.A. Ortega-Yagües)}
\maketitle
\begin{abstract}
In this paper we characterize concircular helices in $\R3$ by means of a differential equation involving their curvature and torsion. We find a full description of concircular surfaces in $\R3$ as a special family of ruled surfaces, and we show that $M\subset\R{3}$ is a proper concircular surface if and only if either $M$ is parallel to a conical surface or $M$ is the normal surface to a spherical curve. Finally, we characterize the concircular helices as geodesics of concircular surfaces.
\end{abstract}
\subjclass{53A04, 53A05}  
\keywords{generalized helix, slant helix, rectifying curve, concircular helix, generalized cylinder, helix surface, conical surface, concircular surface, tangent surface}        

\hinfoo{DD.MM.YYYY}{DD.MM.YYYY} 


\section{Introduction}
\label{s:intro}

The study of curves and surfaces in Euclidean 3-space $\R3$ satisfying a certain condition with respect to a special vector field has a long history. We will not go into detail on this issue now, but we would like to show some classic examples. Generalized helices are defined by the property that their tangents make a constant angle with a fixed direction $v$. A classical result (see \cite{Sai46}) is that a unit speed curve $\gamma$ is a generalized helix if and only if $\tau_\gamma/\kappa_\gamma$ is constant, where $\kappa_\gamma$ and $\tau_\gamma$ stand for the curvature and torsion. Generalized helices have a nice characterization as geodesics of cylinders $M$ whose rulings are parallel to $v$, so that we have $\<N,v\>=0$, $N$ being the unit vector field normal to $M$.

In a similar way, slant helices are defined in \cite{IT1} by the property that their principal normals make a constant angle with a fixed direction $v$. Observe that principal normal lines of a generalized helix are perpendicular to a fixed direction, so that a generalized helix is also a slant helix. It is shown in \cite{IT1} that $\gamma$ is a slant helix if and only if
\[
\frac{\kappa_\gamma^2}{(\kappa_\gamma^2+\tau_\gamma^2)^{3/2}}\left(\frac{\tau_\gamma}{\kappa_\gamma}\right)'=\text{const}.
\]
It is not difficult to see that geodesics of a helix surface $M$ (see \cite{DSRH09}) must be slant helices, and recently it is shown in \cite{LO16a} that every slant helix is a geodesic in a helix surface. In this case, the unit normal vector field $N$ satisfies $\<N,v\>=\text{const}$, where the constant is zero in case the helix surface is a cylinder.

As a third example, rectifying curves in $\R3$ are defined in \cite{Chen03} as the curves whose position vector always lies in its rectifying plane. A necessary and sufficient condition for a curve $\gamma$ with $\kappa_\gamma>0$ be a rectifying curve is that $(\tau_\gamma/\kappa_\gamma)(s)=c_1s+c_0$, for constants $c_0$ and $c_1\neq0$, $s$ being the arclength parameter. It is not difficult to see that geodesics of a conical surface are rectifying curves, and in \cite{Chen17b} the author has shown the converse.

In this paper we want to put the above three examples into a common framework, and to make this possible we need to remember the following concept.
A vector field $Y\in\Xfrak(M)$ on a Riemannian manifold $M$, with Levi-Civita connection $\nabla$, is said to be \emph{concircular} if $\nabla Y=\mu I$, where $\mu\in\mathcal{C}^\infty(M)$ is a differentiable function called the \emph{concircular factor}, \cite{Fia39, Yan40, Kim82, Cram07}. Let $\Con(M)$ denote the set of concircular vector fields on $M$.
Inspired by \cite{DSRH09, Chen16}, we give the following definition.
\begin{definition}\label{concsubm}
Given a submanifold $M\subset\R{n}$ and a concircular vector field $Y\in\Con(\R{n})$, we say that $M$ is a \emph{concircular submanifold} (with axis $Y$) if $\<\bfn,Y\>$ is a constant function along $M$, $\bfn$ being any unit vector field in the first normal space of $M$.
\end{definition}

It is well known (see \cite{Cram07}) that $\Con(\R{n})$ is a real vector space of dimension $n+1$, and a basis is given by the position vector field and $n$ linearly independent constant vector fields. As a consequence, a concircular vector field $Y\in\Con(\R{n})$ is defined by $Y(p)=\mu p+v$, where $\mu\in\R{}$ and $v\in\R{n}$.

This paper is organized as follows. Section \ref{s:helices} contains a characterization of all concircular helices, see theorem \ref{teoconc1}. In Section \ref{s:surfaces} we present some properties of concircular surfaces, see propositions \ref{propcs1} and \ref{propcs2}. We finish that section with some characterizations of concircular surfaces in $\R3$, see theorems \ref{sctheo} and \ref{sctheo2}. Section \ref{s:geodesics} contains the characterization of geodesics curves of concircular surfaces, see propositions \ref{intp1} and \ref{p3}. That characterization result is used to show that concircular helices can be described as the geodesics of concircular surfaces, see theorem \ref{th1}.

\section{Concircular helices}
\label{s:helices}

Let $\gamma:I\to\R3$ be a differentiable curve parametrized by the arclength parameter $s$. At each point of $\gamma$ where $\gamma'(s)\times\gamma''(s)\neq0$, the Frenet frame $\{T_\gamma(s)=\gamma'(s),N_\gamma(s),B_\gamma(s)=T_\gamma(s)\times N_\gamma(s)\}$ satisfies the usual Frenet-Serret equations:
\begin{equation}
T_\gamma'(s)=\kappa_\gamma(s)\,N_\gamma(s),\
N_\gamma'(s)=-\kappa_\gamma(s)\,T_\gamma(s)+\tau_\gamma(s)\,B_\gamma(s),\
B_\gamma'(s)=-\tau_\gamma(s)\,N_\gamma(s).\label{FS-eq}
\end{equation}
The functions $\kappa_\gamma$ and $\tau_\gamma$ are called the curvature and torsion of the curve $\gamma$.

Given a (non geodesic) unit speed curve $\gamma$ in $\R{3}$ and a concircular vector field $Y\in\Con(\R{3})$, from Definition \ref{concsubm} $\gamma$ is a \emph{concircular helix} (with axis $Y$) if $\<N_\gamma,Y\>$ is a constant function along $\gamma$, $N_\gamma$ being the principal normal vector field of $\gamma$.

\begin{example}[Generalized helix]
Generalized helices are concircular helices with axis $Y=v$, and its natural equation is that the ratio $\rho=\tau_\gamma/\kappa_\gamma$ of torsion and curvature is constant (see \cite{Sai46}), i.e. $\rho'=0$.
\end{example}

\begin{example}[Slant helix]
Slant helices are concircular helices with axis $Y=v$, and its natural equation can be rewritten in terms of $\rho=\tau_\gamma/\kappa_\gamma$ as
\begin{equation*}
\frac{\rho'}{\kappa_\gamma(1+\rho^2)^{3/2}}=\text{const}.
\end{equation*}
\end{example}

\begin{example}[Rectifying curve]
Rectifying curves are concircular helices whose axis $Y$ is the position vector field, and its natural equation (see \cite{Chen03}) is
$\rho'=\text{const}\neq0$.
\end{example}

We call $\gamma$ a \emph{proper concircular helix} if $\gamma$ is neither a generalized helix nor a slant helix nor a rectifying curve; in other words, a proper concircular helix is a concircular helix with $\lambda\mu\neq0$, $\lambda$ being the constant $\<N_\gamma,Y\>$.

Let $\gamma\subset\R3$ be a proper concircular helix with associated concircular vector field given by $Y(p)=\mu p+v$, and assume $\<N_\gamma,Y\>=\lambda\in\R{}-\{0\}$. Without loss of generality we can assume that $\gamma$ is parametrized by its arclength parameter $s$. Then
\begin{equation}\label{YRef}
Y(\gamma(s))=a(s)\,T_\gamma(s)+\lambda\,N_\gamma(s)+b(s)\,B_\gamma(s),
\end{equation}
for certain differentiable functions $a$ and $b$. In the following computations we will drop the parameter $s$ to simplify the writing. By taking derivative in (\ref{YRef}) we get
\[
\mu\,T_\gamma=(a'-\lambda\kappa_\gamma)\,T_\gamma+ (a\kappa_\gamma-b\tau_\gamma)\,N_\gamma+ (b'+\lambda\tau_\gamma)\,B_\gamma,
\]
and so
\begin{equation}\label{conc2}
-\lambda\rho\kappa_\gamma=\lambda\left(\frac{\kappa_\gamma(1+\rho^2)}{\rho'}\right)'+ \mu\left(\frac{1}{\rho'}\right)',
\end{equation}
which can be rewritten as
\begin{equation}\label{conc3}
\rho\kappa_\gamma+\left(\frac{\kappa_\gamma(1+\rho^2)}{\rho'}\right)'= \frac{\mu}{\lambda}\frac{\rho''}{\rho'^2}.
\end{equation}
Now, a straightforward computation yields
\begin{equation}\label{conc4}
\left(\frac{\rho'}{\kappa_\gamma(1+\rho^2)^{3/2}}\right)'= m\,\frac{\rho''}{\kappa_\gamma^2(1+\rho^2)^{5/2}},
\end{equation}
where $m=-\mu/\lambda$ is a nonzero constant. Note that equations (\ref{conc2}), (\ref{conc3}) and (\ref{conc4}) are equivalent to each other. Observe that (\ref{conc4}) reduces to classical equation for slant helices when the constant $m$ vanishes.

In the following we will show that equation (\ref{conc4}) characterizes proper concircular helices.

Let $\gamma$ be an arclength parametrized curve whose curvature $\kappa_\gamma$ and function $\rho$ (with $\rho'\neq0$) satisfy equation (\ref{conc4}) for a nonzero constant $m$. Consider the vector field $V(s)$ along $\gamma$ given by
\begin{equation}\label{Darboux}
V=b\,D_\gamma+ \lambda N_\gamma,
\end{equation}
where $D_\gamma=\rho T_\gamma+B_\gamma$ is the (modified) Darboux vector field and $b$ is the differentiable function given by
\begin{equation}\label{ecub}
b=\frac{1}{\rho'}\big(\mu+\lambda\kappa_\gamma(1+\rho^2)\big),
\end{equation}
$\lambda$ and $\mu$ being two nonzero constants such that $\mu=-\lambda m$. Observe that (\ref{conc2}) implies that $b'=-\lambda\tau_\gamma$.
By taking derivative in (\ref{Darboux}) and using (\ref{ecub}) we get
\[
V'=\big(-\lambda\kappa_\gamma(1+\rho^2)+b\rho'\big)\,T_\gamma=\mu\,T_\gamma.
\]
By integrating this equation along $\gamma$ we obtain $V(s)=\mu\gamma(s)+v$, for a constant vector $v$. Finally, we can easily check that the concircular vector field $Y(p)=\mu p+v$ satisfies $\<N_\gamma,Y\>=\lambda$ along $\gamma$, so that $\gamma$ is a proper concircular helix.

We have shown the following characterization result.

\begin{theorem}\label{teoconc1}
Let $\gamma$ be an arclength parametrized curve, with $\kappa_\gamma>0$ and $\rho'\neq0$. $\gamma$ is a proper concircular helix if and only if its curvature $\kappa_\gamma$ and function $\rho=\tau_\gamma/\kappa_\gamma$ satisfy the following differential equation:
\[
\left(\frac{\rho'}{\kappa_\gamma(1+\rho^2)^{3/2}}\right)'= m\,\frac{\rho''}{\kappa_\gamma^2(1+\rho^2)^{5/2}},
\]
for a certain nonzero constant $m\in\R{}$. Moreover, a concircular vector field $Y$ for the proper concircular helix $\gamma$ is the extension of the vector field  $V=b\,D_\gamma+ \lambda N_\gamma$, where $b$ is the differentiable function given by (\ref{ecub}).
\end{theorem}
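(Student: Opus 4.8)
The plan is to prove the stated equivalence in both directions, each built on the decomposition of the axis $Y$ along the Frenet frame. For the direct implication, I would assume $\gamma$ is a proper concircular helix with axis $Y(p)=\mu p+v$ and $\<N_\gamma,Y\>=\lambda\neq0$, and write $Y(\gamma(s))=a\,T_\gamma+\lambda N_\gamma+b\,B_\gamma$ as in (\ref{YRef}). The key observation is that, because $Y$ is concircular, $Y(\gamma(s))=\mu\gamma(s)+v$, so differentiating along $\gamma$ gives simply $\tfrac{d}{ds}Y(\gamma(s))=\mu\,T_\gamma$. Expanding the left-hand side with the Frenet--Serret equations (\ref{FS-eq}) and matching the three frame components produces the scalar system
\[
a'-\lambda\kappa_\gamma=\mu,\qquad a\kappa_\gamma-b\tau_\gamma=0,\qquad b'+\lambda\tau_\gamma=0.
\]
The middle equation gives $a=b\rho$; combining the first and third equations then determines $b$ explicitly through the formula (\ref{ecub}), and the surviving constraint $b'=-\lambda\tau_\gamma$ turns, after substituting (\ref{ecub}), into precisely equation (\ref{conc2}).

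The remaining work is to recast (\ref{conc2}) in the symmetric form stated in the theorem. Dividing through by $\rho'$ and collecting terms gives (\ref{conc3}), and the step I expect to be the main obstacle is the algebraic passage from (\ref{conc3}) to (\ref{conc4}). Writing $f=1+\rho^2$ and expanding the derivatives $\left(\kappa_\gamma f/\rho'\right)'$ and $\left(\rho'/(\kappa_\gamma f^{3/2})\right)'$, both (\ref{conc3}) and (\ref{conc4}) reduce to the single polynomial identity $\kappa_\gamma f\rho''-\kappa_\gamma' f\rho'-3\kappa_\gamma\rho\rho'^2=-\tfrac{\mu}{\lambda}\rho''$, which shows they are equivalent with $m=-\mu/\lambda$. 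This is routine but error-prone bookkeeping with the powers of $(1+\rho^2)$, and it is here that the hypothesis $\rho'\neq0$ is used, both to divide and to keep $b$ in (\ref{ecub}) well defined; the requirement $\lambda\mu\neq0$ translates exactly into $m\neq0$.

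For the converse I would reverse the construction. Given a curve with $\kappa_\gamma>0$, $\rho'\neq0$ satisfying (\ref{conc4}) for some $m\neq0$, I fix any nonzero constant $\lambda$, set $\mu=-\lambda m\neq0$, define $b$ by (\ref{ecub}), and form $V=b\,D_\gamma+\lambda N_\gamma$ as in (\ref{Darboux}) with $D_\gamma=\rho T_\gamma+B_\gamma$. Since (\ref{conc4}) is equivalent to (\ref{conc2}), the function $b$ satisfies $b'=-\lambda\tau_\gamma$, and a direct differentiation using (\ref{ecub}) collapses $V'$ to $\mu\,T_\gamma$. Integrating along $\gamma$ yields $V(s)=\mu\gamma(s)+v$ for a constant vector $v\in\R3$, so $Y(p)=\mu p+v\in\Con(\R3)$ satisfies $Y(\gamma)=V$; reading off the $N_\gamma$-component gives $\<N_\gamma,Y\>=\lambda$, and since $\lambda\mu\neq0$ the curve is a \emph{proper} concircular helix. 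The ``Moreover'' clause then follows at once, as $Y$ is by construction the extension of $V=b\,D_\gamma+\lambda N_\gamma$.
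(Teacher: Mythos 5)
Your proposal is correct and follows essentially the same route as the paper: the same Frenet decomposition $Y|_\gamma=a\,T_\gamma+\lambda N_\gamma+b\,B_\gamma$ yielding $a=b\rho$ and the formula (\ref{ecub}) for $b$, the same equivalence chain (\ref{conc2})--(\ref{conc4}) via the polynomial identity you state (which I checked is the right one), and the same converse construction $V=b\,D_\gamma+\lambda N_\gamma$ with $V'=\mu\,T_\gamma$ integrated to recover the axis $Y(p)=\mu p+v$. No gaps.
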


\section{Concircular surfaces}
\label{s:surfaces}

In the particular case of a surface, $M$ is said to be a concircular surface (with axis $Y$) if $\<N,Y\>$ is a constant function along $M$, $N$ being a unit normal vector field.

Trivial examples of concircular surfaces are planes and spheres.
It should be pointed out that concircular surfaces are different from constant slope surfaces, \cite{Mun10}. A constant slope surface is a surface whose unit normal vector field $N$ makes a constant angle with the position vector. Therefore, the sphere is the only surface that is at the same time a concircular surface and a constant slope surface. We can thus say that these two families are essentially distinct.
Other nontrivial examples of concircular surfaces are the following.

\begin{example}[Generalized cylinder]\label{ex-gc}
Let $M$ be a surface parametrized by
$X(s,z)=\beta(s)+z\,v$,
where $\beta$ is a plane curve with Frenet frame $\{T_\beta,N_\beta\}$ and $v$ is a constant vector orthogonal to the plane containing the curve $\beta$. The unit normal vector field is given $N(s,z)=N_\beta(s)$ and the concircular vector field can be chosen as $Y=v$, satisfying $\<N,Y\>=0$ everywhere.
\end{example}

\begin{example}[Helix surface]\label{ex-hs}
Let $M$ be a surface parametrized by
$X(s,z)=\beta(s)+z\,(\cos\varphi\,N_\beta(s)+\sin\varphi\,v),\ \varphi\in (0,\pi/2]$,
where $\beta$ and $v$ are as in the previous example. Here, the normal vector field is given $N(s,z)=-\sin\varphi\,N_\beta(s)+\cos\varphi\,v$ and the concircular vector field can be chosen again as $Y=v$, satisfying $\<N,Y\>=\cos\varphi$. These surfaces are also known as constant angle surfaces, \cite{DSRH09}.
\end{example}

\begin{example}[Conical surface]\label{ex-cs}
Let $M$ be a surface parametrized by
$X(s,z)=z\,\beta(s)$,
where $\beta$ is a spherical curve, $\beta(s)\in\S{2}(1)$, with Darboux frame $\{T_\beta,N_\beta\}$. In this case, the normal vector field is given by $N(s,z)=N_\beta(s)$ and the concircular vector field $Y$ can be chosen as the position vector field.
\end{example}

A nontrivial concircular surface $M\subset\R{3}$ is said to be \emph{proper} if it is neither a generalized cylinder, nor a helix surface nor a conical surface, i.e., $\lambda\mu\neq0$.

For a proper concircular surface, we can think that the concircular vector field $Y$ is essentially the position vector ($\mu=1, v=0$), so that condition $\<Y(s,z),N(s,z)\>=\lambda$ is equivalent to saying that the distance from the origin to the tangent planes to the surface is constant and equal to $\lambda$, i.e., that all planes tangent to the surface are also tangent to the sphere of radius $\lambda$ centered at the origin.

\begin{proposition}\label{propcs1}
A surface $M\subset\R{3}$ admits a concircular vector field parallel to its normal vector field along $M$ if and only if $M$ is an open piece of a plane or a sphere (i.e., if and only if $M$ is a trivial concircular surface).
\end{proposition}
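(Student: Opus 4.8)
The plan is to exploit the explicit structure of concircular vector fields on $\R3$ recalled in the introduction, namely $Y(p)=\mu p+v$ with $\mu\in\R{}$ and $v\in\R3$, and to reduce the statement to two elementary cases according to whether $\mu$ vanishes. The sufficiency (``if'') direction is immediate and I would dispatch it in one line each: for an open piece of a plane with constant unit normal $w$ one takes the constant field $Y=w$ (so $\mu=0$, $v=w$), which is trivially parallel to $N$ everywhere; for an open piece of a sphere centred at a point $p_0$ one takes $Y(p)=p-p_0$ (so $\mu=1$, $v=-p_0$), whose value at each $p$ is the radial vector, which is exactly the normal direction of the sphere. Thus the real content of the proposition lies in the necessity (``only if'') direction.

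For the necessity direction, assume $M$ is connected and admits a nonzero concircular field $Y(p)=\mu p+v$ with $Y$ parallel to $N$ along $M$; equivalently, $\<Y,W\>=0$ for every vector $W$ tangent to $M$. I would first treat the case $\mu=0$: here $Y=v$ is a nonzero constant vector, and $Y\parallel N$ forces the unit normal $N$ to equal the constant $\pm v/|v|$. By continuity on the connected surface $N$ is then globally constant, and a surface with constant Gauss map is an open piece of a plane.

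The case $\mu\neq0$ is the geometrically interesting one, and the key idea I would use is to avoid integrating any differential equation and instead test a single scalar function. Writing $p_0=-v/\mu$ so that $Y(p)=\mu\,(p-p_0)$, I would consider the smooth function $h:M\to\R{}$ defined by $h(p)=\<p-p_0,\,p-p_0\>$. For any tangent vector $W$ one computes $W(h)=2\<p-p_0,W\>=\tfrac{2}{\mu}\<Y,W\>=0$, the last equality because $Y$ is normal to $M$. Hence $h$ has vanishing differential on the connected surface $M$, so $h$ is constant; that is, $|p-p_0|$ is constant along $M$, and $M$ is an open piece of the sphere of that radius centred at $p_0$.

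The step I expect to require the most care is not a computation but the handling of the degenerate situations: one must assume $Y\not\equiv0$ (otherwise the hypothesis ``$Y$ parallel to $N$'' is vacuous), and one must invoke connectedness of $M$ (the ``open piece'' hypothesis) to pass from a locally constant normal, respectively a locally constant $h$, to a globally constant one. A minor point worth recording is that in the case $\mu\neq0$ the argument via $h$ never divides by $Y$ and so remains valid even at the single point $p_0$ where $Y$ might vanish, since there $\<Y,W\>=0$ holds trivially.
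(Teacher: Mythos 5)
Your argument is correct, but it follows a different route from the paper's. The paper writes $Y|_M=\lambda\,N$ for a function $\lambda$, differentiates this identity using the concircularity condition together with the Weingarten formula to obtain $\mu X=X(\lambda)N-\lambda AX$, reads off from the normal component that $\lambda$ is a (nonzero) constant, and from the tangential component that $AX=-(\mu/\lambda)X$, i.e.\ that $M$ is totally umbilical; it then invokes the classification of totally umbilical surfaces in $\R3$. You instead use the explicit representation $Y(p)=\mu p+v$ and split into cases: for $\mu=0$ the constant vector $v$ is everywhere normal, so the Gauss map is constant and $M$ is planar; for $\mu\neq0$ the function $h(p)=|p-p_0|^2$ with $p_0=-v/\mu$ has vanishing differential because $Y$ is normal, so $M$ lies on a sphere about $p_0$. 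Your approach is more elementary in that it never mentions the shape operator and does not rely on the classification of umbilical surfaces (whose standard proof is essentially your computation anyway), and it has the merits of treating the trivial ``if'' direction explicitly and of noting that the argument survives at an isolated zero of $Y$; its cost is that it depends on the explicit form of concircular fields on $\R{n}$, whereas the paper's computation is intrinsic and would transfer verbatim to a concircular field on a general ambient manifold. Both proofs share the same implicit hypothesis, which you rightly make explicit: one must exclude $Y\equiv0$, since otherwise the parallelism condition is vacuous.
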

\begin{proof}
Let $N$ denote the unit normal vector field of the surface and assume there is a concircular vector field $Y$ such that its restriction to $M$ satisfies $Y|_M=\lambda\,N$, for a certain differentiable function $\lambda$. Then we have
$\mu\,X=X(\lambda)\,N-\lambda\,AX$,
for all tangent vector field $X$, where $A$ stands for the Weingarten operator of $M$. By equating the normal components of both sides we deduce that $\lambda$ is a nonzero constant. Moreover we get
$AX=-(\mu/\lambda)\,X$,
showing that $M$ is a totally umbilical surface. This concludes the proof.
\end{proof}

Let $M\subset\R{3}$ be a nontrivial concircular surface and let $Y$ denote the corresponding concircular vector field on $\R{3}$ with $\<Y,N\>=\lambda$ (constant) along $M$. By taking tangent and normal components of $Y$ restricted to $M$ we can write
\begin{equation}\label{scT}
Y|_M=Y^T+Y^N=\delta\,T+\lambda\,N,
\end{equation}
where $T$ is a unit tangent vector field and $\delta$ is a nonzero differentiable function (otherwise, $M$ should be a trivial concircular surface by the previous proposition).
\begin{proposition}\label{propcs2}
Let $M$ be a proper concircular surface in $\R3$. Then:\vspace*{-\topsep}
\begin{enumerate}\itemsep0pt
\item[$i)$] $M$ is a flat surface.
\item[$ii)$] The integral curves of $T$ are straight lines.
\end{enumerate}
\end{proposition}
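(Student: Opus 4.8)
The plan is to exploit the ambient description of the axis $Y$. Since the concircular vector field is $Y(p)=\mu p+v$, its ambient covariant derivative satisfies $\overline{\nabla}_X Y=\mu X$ for every tangent vector field $X$ on $M$, where $\overline{\nabla}$ denotes the flat connection of $\R3$ and $\nabla$ will denote the induced Levi-Civita connection on $M$. I would substitute the orthogonal decomposition (\ref{scT}), $Y|_M=\delta\,T+\lambda\,N$, into this identity and expand the left-hand side by means of the Gauss and Weingarten formulas $\overline{\nabla}_X T=\nabla_X T+\<AX,T\>N$ and $\overline{\nabla}_X N=-AX$, with $A$ the Weingarten operator. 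Recalling that $\lambda$ is constant, this yields
\[
X(\delta)\,T+\delta\,\nabla_X T+\delta\,\<AX,T\>N-\lambda\,AX=\mu X
\]
for all tangent $X$.

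Separating normal and tangential components is where the argument does its work. The normal part reads $\delta\,\<AX,T\>=0$; since $\delta\neq0$ (otherwise $M$ would be trivial, by Proposition \ref{propcs1}) and $A$ is self-adjoint, this forces $AT=0$. Thus $T$ is a principal direction with vanishing principal curvature, so $0$ is an eigenvalue of $A$ and $K=\det A=0$; this proves part $i)$.

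For part $ii)$ I would return to the tangential component, $X(\delta)\,T+\delta\,\nabla_X T-\lambda\,AX=\mu X$, and evaluate it at $X=T$. Using $AT=0$ it collapses to $T(\delta)\,T+\delta\,\nabla_T T=\mu\,T$. The one point requiring a little care is to pair this with $T$: since $T$ is unit we have $\<\nabla_T T,T\>=0$, so the equation splits into $T(\delta)=\mu$ and $\delta\,\nabla_T T=0$, whence $\nabla_T T=0$. Combining this with $AT=0$ gives $\overline{\nabla}_T T=\nabla_T T+\<AT,T\>N=0$, so the integral curves of $T$ have vanishing acceleration in $\R3$ and are therefore straight lines.

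The computation itself is short, and I do not anticipate a genuine obstacle; the main thing to get right is the clean bookkeeping of tangential versus normal parts, from which the single identity $AT=0$ delivers both conclusions at once—flatness immediately, and the geodesic (hence straight-line) character of the $T$-curves after one further evaluation.
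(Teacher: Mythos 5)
Your argument is correct and coincides with the paper's own proof: both differentiate the decomposition $Y|_M=\delta\,T+\lambda\,N$ using the Gauss and Weingarten formulas, extract $AT=0$ from the normal component (using $\delta\neq0$ via Proposition \ref{propcs1}) to get flatness, and then set $X=T$ in the tangential part to obtain $\nabla_TT=0$. Your extra care in splitting off $T(\delta)=\mu$ via $\<\nabla_TT,T\>=0$ and in noting $\overline{\nabla}_TT=\nabla_TT+\sigma(T,T)=0$ only makes explicit what the paper leaves implicit.
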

\begin{proof}
By derivating in (\ref{scT}) with respect to a tangent vector field $X$ and using the Gauss and Weingarten formulae we have
\begin{equation}\label{sceq1}
\mu X=X(\delta)T+\delta\nabla_XT-\lambda AX+\delta\sigma(X,T),
\end{equation}
where $\nabla$ and $\sigma$ denote de Levi-Civita connection and second fundamental form of $M$. By equating normal components of (\ref{sceq1}) and bearing in mind that $\delta\neq0$ we get
$\sigma(X,T)=0$,
for any tangent vector field $X$. These equation is equivalent to
$AT=0$, which implies $i)$.
Now, by taking $X=T$ in (\ref{sceq1}), 
we also obtain
$\nabla_TT=0$,
showing $ii)$.
\end{proof}
\medskip

In the following, we present a method for constructing nontrivial concircular surfaces from curves on totally umbilical surfaces.
Let $S\subset\R3$ be a totally umbilical surface of constant curvature $c$ (we can take $S$ as a plane or a sphere of radius $r$ centered at $p_0$). Take a unit speed curve $\beta:I\subset\R{}\to S\subset\R3$, with curvature $\kappa_\beta(t)$ and Darboux frame $\{T_\beta(t),N_\beta(t),\eta(t)=T_\beta(t)\times N_\beta(t)\}$. Then we have the Frenet-Darboux equations of $\beta$,
\[
T_\beta'(t) = -\sqrt{c}\,\eta(t)+\kappa_\beta(t)\,N_\beta(t),\quad
N_\beta'(t) = -\kappa_\beta(t)\,T_\beta(t),\quad
   \eta'(t) = \sqrt{c}\,T_\beta(t).
\]
Let $M=M_{\beta,\varphi}$ be the ruled surface built on the curve $\beta$ parametrized by
\begin{equation}\label{intparamet}
X(t,z)=\beta(t)+z\,\big(\cos\varphi\,N_\beta(t)+\sin\varphi\,\eta(t)\big),
\end{equation}
for a constant $\varphi\in(0,\pi/2]$.
The unit normal vector field is given by
$N(t,z)=-\sin\varphi\, N_\beta(t)+\cos\varphi\,\eta(t)$,
and this shows that surfaces $M$ parametrized by (\ref{intparamet}) are concircular surfaces. Indeed, if $S$ is a plane with unit normal vector $\eta_0$, then the concircular vector field $Y(p)=\eta_0$ satisfies $\<Y,N\>=\cos\varphi$ constant. Otherwise, if $S$ is a sphere of radius $r$ centered at $p_0$, then the concircular vector field $Y(p)=\sqrt{c}\,(p-p_0)$ also satisfies the condition $\<Y,N\>=\cos\varphi$ constant.

\begin{theorem}\label{sctheo}
Let $M$ be a nontrivial concircular surface with associated concircular vector field $Y$. Then $M$ can be locally parametrized by
\begin{equation}\label{teoparametrization}
X(t,z)=\beta(t)+z\,\big(\cos\varphi\,N_\beta(t)+\sin\varphi\,\eta(t)\big),
\end{equation}
with $\varphi\in(0,\pi/2]$, $\beta$ being a curve in a totally umbilical surface $S\subset\R{3}$ whose normal vector field $\eta$ is parallel to $Y$ along $S$.
\end{theorem}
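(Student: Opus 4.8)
The plan is to use Proposition \ref{propcs2} to turn $M$ into a ruled surface, and then to produce the curve $\beta$ and the totally umbilical surface $S$ explicitly: $\beta$ as a cross-section orthogonal to the rulings, and $S$ as a level set of $|Y|$ whose normal is $Y$ itself.

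First I would introduce an orthonormal tangent frame $\{T,e\}$, where $T$ is the unit field from the decomposition $Y|_M=\delta\,T+\lambda\,N$ in (\ref{scT}) and $e\perp T$, so that $N=T\times e$. Proposition \ref{propcs2} gives $AT=0$ and $\nabla_TT=0$, whence, for the Euclidean connection $\nabla^{\R3}$, one has $\nabla^{\R3}_TT=\nabla^{\R3}_TN=0$; thus $T$ and $N$ are constant along each ruling and the rulings are straight lines. Recording the remaining structure equations $\nabla^{\R3}_eT=\omega\,e$ and $\nabla^{\R3}_eN=-\kappa\,e$, with $\omega=\langle\nabla_eT,e\rangle$ and $\kappa=\langle Ae,e\rangle$, and comparing $\nabla^{\R3}_XY=\mu X$ with the derivative of $\delta T+\lambda N$ for $X=T$ and $X=e$, I obtain
\[
T(\delta)=\mu,\qquad e(\delta)=0,\qquad \delta\omega-\lambda\kappa=\mu .
\]
In particular $\delta$ is constant along the integral curves of $e$ and varies affinely along each ruling.

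Next I would let $\beta$ be one integral curve of $e$, parametrized by arclength so that its unit tangent is $T_\beta=\pm e$, and rescale $Y$ so that $|Y|=1$ on $\beta$; since $\delta$ and $\lambda$ are constant on $\beta$ this is just the normalization $\delta^2+\lambda^2=1$, and it lets me set $\varphi\in(0,\pi/2]$ by $\cos\varphi=\lambda$, $\sin\varphi=\delta$, where $\delta\neq0$ because $M$ is nontrivial (otherwise Proposition \ref{propcs1} would force a plane or a sphere). I then take $S=\{\,p:|Y(p)|=1\,\}$. From $Y(p)=\mu p+v$ one gets $\nabla|Y|^2=2\mu\,Y$, so $Y$ is orthogonal to its level sets: for $\mu\neq0$ this set is the sphere of radius $1/|\mu|$ centred at $-v/\mu$, and for $\mu=0$ I instead take $S$ to be the plane through $\beta$ orthogonal to the constant vector $Y=v$. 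In both cases $S$ is totally umbilical, and $\langle e,Y\rangle=\delta\langle e,T\rangle+\lambda\langle e,N\rangle=0$ shows that $T_\beta$ is tangent to $S$; hence $\beta\subset S$, the unit normal $\eta$ of $S$ is parallel to $Y$ along $S$, and $\eta=Y$ on $\beta$.

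Finally I would do the frame bookkeeping on $\beta$. Fixing the orientation $T_\beta=-e$, the intrinsic normal is $N_\beta=\eta\times T_\beta=(\delta T+\lambda N)\times(-e)=\lambda\,T-\delta\,N$, using $T\times e=N$ and $N\times e=-T$; inverting $\eta=\delta T+\lambda N$ and $N_\beta=\lambda T-\delta N$ with $\delta^2+\lambda^2=1$ gives $T=\delta\,\eta+\lambda\,N_\beta=\cos\varphi\,N_\beta+\sin\varphi\,\eta$. Since the rulings are straight lines of unit direction $T$ issuing from $\beta$, parametrizing $M$ by arclength $z$ along them yields $X(t,z)=\beta(t)+z\,T(t)=\beta(t)+z\big(\cos\varphi\,N_\beta(t)+\sin\varphi\,\eta(t)\big)$, which is exactly (\ref{teoparametrization}). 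The frame computation of the second paragraph is routine once Proposition \ref{propcs2} is in hand; the step that needs care is the third one, where the rescaling of $Y$, the split $\mu=0$ versus $\mu\neq0$, and the identification of the level sets of $|Y|$ as totally umbilical surfaces with axis $Y$ must be organized so that a single argument covers cylinders, helix surfaces, cones and proper concircular surfaces simultaneously, and where one must fix the signs of $N$ and of $\beta$'s orientation so that $\lambda,\delta\ge0$ and the signs above come out correctly.
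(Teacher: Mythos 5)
Your proposal is correct and follows essentially the same route as the paper: Proposition \ref{propcs2} gives the ruled structure, the totally umbilical surface $S$ is taken normal to $Y$ (a plane when $\mu=0$, a sphere centred at the critical point of $Y$ when $\mu\neq0$), $\beta$ is the cross-section orthogonal to the rulings (which is exactly the intersection $S\cap M$ used in the paper), and the constancy of $\varphi$ comes from the constancy of $\delta$ and $\lambda$ along $\beta$. The only cosmetic difference is that you obtain $e(\delta)=0$ directly from the structure equations, whereas the paper deduces the same fact from the constancy of $\norma{Y}$ along $\beta\subset S$ via the decomposition (\ref{Ybeta}); both are the same computation in different clothing.
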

\begin{proof}
Assume that $Y(q)=\mu\,q+v$, for all point $q\in\R3$, and $\<Y,N\>=\lambda$ for a certain constant $\lambda$, $N$ being the unit vector field normal to $M$. Given a point $p\in M$, with $Y(p)\neq0$, let $S_p$ be a totally umbilical surface containing the point $p$ and whose normal vector field is parallel to $Y$. This surface $S_p$ can be constructed as follows:
\begin{itemize}
\item when $\mu=0$, let $S_p$ be the plane $S_p=\{q\in\R3:\<q-p,v\>=0\}$.
\item when $\mu\neq0$, let $S_p$ be the sphere $\S{2}((-1/\mu)v,r)$ centered at the critical point of vector field $Y$ and of radius $r=\norma{p+(1/\mu) v}$, its distance to the point $p$.
\end{itemize}
From Proposition \ref{propcs1}, there is a point $p\in M$ such that the tangent planes at $p$ to the surfaces $S_p$ and $M$ are distinct. This allows us to define the curve $\beta=S_p\cap M$, parametrized by $\beta(t)$ with $\beta(0)=p$. As before, consider the decomposition of vector field $Y$ along $M$ as $Y=\delta\,T+\lambda\,N$, $T$ being a unit tangent vector field. It is easy to see that the tangent plane to $M$ along $\beta$ is spanned by  $\{T_\beta,\,T\}$ (note that $T$ and $T_\beta$ are orthogonal). By using Proposition \ref{propcs2}, we deduce there exists a neighborhood $U$ of the point $p$ given by
\[
U=\{\beta(t)+zT(\beta(t)):t\in(-\e_1,\e_1),\ z\in(-\e_2,\e_2)\},\ \e_1,\e_2>0.
\]
Since the vector fields $T$ and $T_\beta$ are orthogonal along $\beta$, there is a function $\varphi=\varphi(t)$ such that
\(
T(\beta(t))=\cos\varphi(t)\,N_\beta(t)+\sin\varphi(t)\,\eta(t),
\)
and so we also have
\(
N(\beta(t))=-\sin\varphi(t)\,N_\beta(t)+\cos\varphi(t)\,\eta(t).
\)
Now we can write the concircular vector field $Y$ along $\beta$  as follows
\begin{equation}\label{Ybeta}
Y(\beta(t))= \big(\delta(\beta(t))\cos\varphi(t)-\lambda\sin\varphi(t)\big)N_\beta(t)+
\big(\delta(\beta(t))\sin\varphi(t)+\lambda\cos\varphi(t)\big)\eta(t).
\end{equation}
Since the normal vector field $\eta$ of $S_p$ is parallel to vector field $Y$ and $Y|_\beta$ is of constant length (specifically $\norma{Y|_\beta}=\norma{v}$ when $\mu=0$, and $\norma{Y|_\beta}=|\mu|\,r$ when $\mu\neq0$), then from (\ref{Ybeta}) we get $\delta$ is constant along $\beta$, and so $\varphi$ is constant. Without loss of generality we can assume  $\varphi\in(0,\pi/2]$. Therefore $U$ can be parametrized as in (\ref{teoparametrization}) and this concludes the proof.
\end{proof}

The above result characterizes the three classic examples of concircular surfaces: the cylindrical surfaces ($\mu=0$, $\varphi=\pi/2$), the helix surfaces ($\mu=0$, $\varphi\in(0,\pi/2)$) and the conical surfaces ($\mu\neq0$, $\varphi=\pi/2$).

To finish this section, we present a nice characterization of proper concircular surfaces.
\begin{theorem}\label{sctheo2}
$M\subset\R3$ is a proper concircular surface if and only if one of the following conditions hold:\vspace*{-\topsep}
\begin{enumerate}\itemsep0pt
\item[$1)$] $M$ is the normal surface to a spherical curve.
\item[$2)$] $M$ is parallel to a conical surface.
\end{enumerate}
\end{theorem}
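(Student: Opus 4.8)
The plan is to extract everything from the parametrization furnished by Theorem \ref{sctheo} and then read off both geometric descriptions directly from it, treating the two implications in parallel. For the forward direction, start from a proper concircular surface $M$ with axis $Y(q)=\mu q+v$ and $\<Y,N\>=\lambda$. Since $M$ is proper we have $\lambda\mu\neq0$; in particular $\mu\neq0$, so the totally umbilical surface produced by Theorem \ref{sctheo} is the sphere $S$ of radius $r$ centred at $p_0=-(1/\mu)v$, and $M$ is parametrized by (\ref{teoparametrization}) over a spherical curve $\beta\subset S$ with a constant angle $\varphi$. A short computation gives $\lambda=\mu r\cos\varphi$, so $\lambda\neq0$ forces $\varphi\in(0,\pi/2)$, and I set $R:=r\cos\varphi=\lambda/\mu>0$.

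The first description (condition 1), which I expect to hold in \emph{every} case, comes from introducing the auxiliary spherical curve $\sigma(t)=p_0+R\,N(t)$ lying on $\S{2}(p_0,R)$, where $N$ is the unit normal of $M$. Using the Frenet--Darboux equations for $\beta$ one checks that along $M$ the Darboux frame of $\sigma$ is $\{T_\sigma\parallel T_\beta,\ N_\sigma=\cos\varphi\,N_\beta+\sin\varphi\,\eta,\ \eta_\sigma=N\}$; in particular the ruling direction $u=\cos\varphi\,N_\beta+\sin\varphi\,\eta$ appearing in (\ref{teoparametrization}) is precisely the Darboux normal $N_\sigma$ of $\sigma$. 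Rewriting the surface with $\sigma$ as base curve then displays $M$ as the family of lines through $\sigma(t)$ in the direction $N_\sigma(t)$, i.e.\ as the normal surface to the spherical curve $\sigma$.

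For the second description (condition 2), I would compute the inner parallel surface $X-R\,N$. Since $M$ is flat and ruled (Proposition \ref{propcs2}), $N$ is constant along each ruling, so this parallel surface is again ruled with the same directions $u$, and one verifies that every one of its rulings passes through the single point $p_0$. Hence it is the cone with vertex $p_0$ and spherical directrix $u$ (Example \ref{ex-cs}), and $M$ is its parallel at distance $R$. This step needs the directrix to be regular, $u'=N_\sigma'=-\kappa_\sigma T_\sigma\neq0$, that is, $\sigma$ must not be a great circle; when $\sigma$ is a great circle the cone degenerates to a line and $M$ is a circular cylinder, still covered by condition 1. For the converse I would check that each construction yields $\lambda\mu\neq0$: if $M=\{\sigma(t)+z\,N_\sigma(t)\}$ is the normal surface to a spherical curve $\sigma\subset\S{2}(q_0,R)$, a direct computation gives unit normal $N=\pm\eta_\sigma$ and, for $Y(q)=\mu(q-q_0)$, the constant $\<Y,N\>=\mu R\neq0$; and if $M$ is the parallel at distance $d\neq0$ of a conical surface with vertex $q_0$, then $M$ and the cone share $N$, and the same $Y$ gives $\<Y,N\>=\mu d\neq0$ (using $\<\alpha,N_\alpha\>=0$ for the directrix).

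The main obstacle is the bookkeeping of the two moving frames: correctly matching the Darboux frame of the auxiliary curve $\sigma$ on the smaller sphere $\S{2}(p_0,R)$ with the frame $\{T_\beta,N_\beta,\eta\}$ of $\beta$ on $S$, and tracking signs and orientations so that the identification $N_\sigma=u$ comes out exactly. The only genuinely delicate point is the degenerate case $\kappa_\sigma\equiv0$ (great circle): there the associated cone collapses and condition 2 becomes unavailable, so one must fall back on condition 1. This is precisely why the statement is phrased as a disjunction rather than as a single description.
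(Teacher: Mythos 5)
Your proposal is correct and follows essentially the same route as the paper: both start from the parametrization supplied by Theorem \ref{sctheo} over a spherical base curve and rewrite it with respect to an auxiliary curve (your $\sigma=p_0+R\,N$ is exactly the paper's rescaled curve $-\cos\varphi\,\delta$, and your cone directrix is the paper's second choice of $\delta$), then verify the converse by the same direct computation of $\<Y,N\>$. Your version is somewhat more explicit about the degenerate great-circle case and the identification of Darboux frames, but the underlying argument is the same.
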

\begin{proof}
Let $M$ be a concircular surface and consider $X(t,z)$ its parametrization as in (\ref{teoparametrization}), where we assume $\beta(t)$ is a unit speed curve in the sphere $\S2(r)$. Take the curve
\(
\delta(t)=r(\sin\varphi\, N_\beta(t)-\cos\varphi\, \eta(t)),
\)
with $\eta(t)=\frac1r\beta(t)$,
whose Darboux frame $\{T_\delta,N_\delta,(1/r)\delta\}$
is given without loss of generality by
$T_\delta(u(t))=-T_\beta(t)$ and
$N_\delta(u(t))=\cos\varphi\, N_\beta(t)+\sin\varphi\,\eta(t)$,
where $u=u(t)$ denotes the arc parameter of $\delta$. From here we obtain
\[
\beta(t)=-\cos\varphi\,\delta(u(t))+r\sin\varphi\,N_\delta(u(t)),\quad
N_\beta(t)=\frac1r\sin\varphi\,\delta(u(t))+\cos\varphi\,N_\delta(u(t)),
\]
and then $X(t,z)$ can be rewritten as
\[
X(t,z)=-\cos\varphi\,\delta(u(t))+(r\sin\varphi+z)\,N_\delta(u(t)).
\]
This shows that $M$ is the normal surface to a spherical curve.\par
In a similar way, using the same type of reasoning but applying it to the curve
\(
\delta(t)=r(\cos\varphi\, N_\beta(t)+\sin\varphi\, \eta(t)),
\)
we show that $M$ is parallel to a conical surface.\par
To prove the converse, let us assume $M$ is the normal surface to a spherical curve  $\delta(u)$ in the sphere $\S2(r)$, with Darboux frame $\{T_\delta,N_\delta,(1/r)\delta\}$, and consider the surface $M$ parametrized by
\[
X(u,z)=\delta(u)+zN_\delta(u).
\]
Then the unit normal vector field $N(u,z)$ is given without loss of generality by $N(u,z)=(1/r)\delta(u)$, and therefore $\<N(u,z),X(u,z)\>=r$, i.e., $M$ is a concircular surface.\par
A similar reasoning can be done if $M$ is parallel to a conical surface.
\end{proof}

\begin{example}[A family of concircular surfaces]\label{ex-sup-conc}
Without loss of generality (see \cite{Sco95, Str88}), an arclength parametrized spherical generalized helix is given by
\begin{align*}
\delta(u)= & \left(\frac{a+m}{2a}\cos\Big(\frac{a-m}{a}\eta(u)\Big)- \frac{a-m}{2a}\cos\Big(\frac{a+m}{a}\eta(u)\Big)\right.,\\
& \kern10pt\frac{a+m}{2a}\sin\Big(\frac{a-m}{a}\eta(u)\Big)- \frac{a-m}{2a}\sin\Big(\frac{a+m}{a}\eta(u)\Big),
\left.
\frac{w}{a}\cos\Big(\frac{m}{a}\eta(u)\Big)\right),
\end{align*}
where $\eta(u)=\frac{a}{m}\arccos(\frac{-m}{w}u)$ and $a^2=m^2+w^2$.
Taking $\mu(u)=\sqrt{w^2-m^2u^2}$, a straightforward computation leads to
\begin{align*}
\delta(u)&= \frac{1}{aw}\Big(-m^2u\cos\eta(u)+a\mu(u)\sin\eta(u), -a\mu(u)\cos\eta(u)-m^2u\sin\eta(u), -mwu\Big),\\
T_\delta(u)&= \frac1a \Big(w\cos\eta(u),w\sin\eta(u),-m\Big),\\
N_\delta(u)&= \frac{1}{aw}\Big(m\mu(u)\cos\eta(u)+amu\sin\eta(u), -amu\cos\eta(u)+m\mu(u)\sin\eta(u), w\mu(u)\Big).
\end{align*}
According to Theorem \ref{sctheo2}, a family of concircular surfaces is given by the following parametrization
\begin{align*}
X(u,v)= & \frac{1}{aw}\Big((-m^2u+m\mu(u)v)\cos\eta(u)+ (a\mu(u)+amuv)\sin\eta(u),\\
   & \kern25pt -a(muv+\mu(u))\cos\eta(u)+ (m\mu(u)v-m^2u)\sin\eta(u),
-mwu+w\mu(u)v\Big),
\end{align*}
for certain non-zero parameters $a,m,w$.
\begin{center}
\includegraphics[width=.4\linewidth,angle=90]{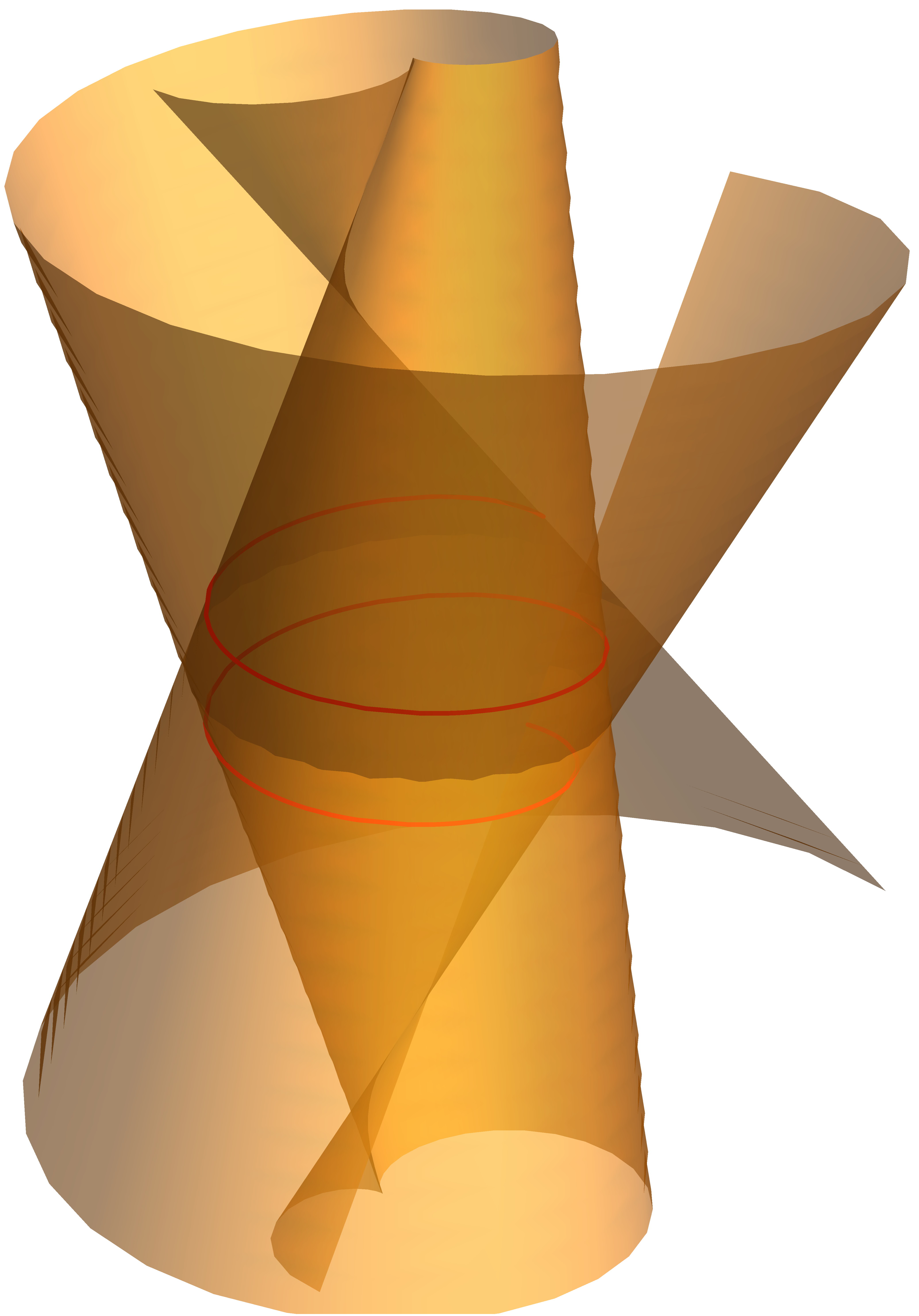}
\end{center}
\end{example}

\section{Geodesics of concircular surfaces}
\label{s:geodesics}

Let $M$ be a nontrivial proper concircular surface. Since any geodesic of $M$ satisfies that its principal normal vector field is parallel to the unit vector field normal to $M$, it is clear that every geodesic on a concircular surface is a concircular helix. Now we will find the equations characterizing the geodesics in a concircular surface.

Let $\gamma(s)=X\big(t(s),z(s)\big)$ be a unit speed geodesic of $M$, with $\kappa_\gamma>0$, where $X$ is given by (\ref{teoparametrization}). Making the same computations as in \cite{LO16a} we can deduce there is a differentiable function $\theta$ such that
\begin{align}
t'(s)\big(1+z(s)(-\cos\varphi\,\kappa_\beta(t(s))+\sqrt{c}\,\sin\varphi)\big) &= \sin\theta(s),\label{p1.1}\\
z'(s) &= \cos\theta(s).\label{p1.2}
\end{align}
From the Frenet equations (\ref{FS-eq}), and using that $\gamma$ is a geodesic, we deduce
\begin{align}
\cos\theta(s)\,\big(\theta'(s)-\cos\varphi\, t'(s)\,\kappa_\beta(t(s))+\sqrt{c}\,\sin\varphi\,t'(s)\big) &= 0,\label{eq1}\\
\sin\theta(s)\,\big(t'(s)\,\kappa_\beta(t(s))-\cos\varphi\, \theta'(s)\big) &= -\sin\varphi\,\kappa_\gamma(s),\label{eq2}\\
-\sin\theta(s)\,\big(\sqrt{c}\,t'(s)+\sin\varphi\,\theta'(s)\big) &= \cos\varphi\,\kappa_\gamma(s).\label{eq3}
\end{align}
Since $M$ is a proper concircular surface then $\cos\theta(s)\neq0$, and from (\ref{eq1}) we get
\begin{equation}\label{p1.3}
\theta'(s)= t'(s)\big(\cos\varphi\,\kappa_\beta(t(s))-\sqrt{c}\,\sin\varphi\big).
\end{equation}
Finally, from a straightforward computation, we deduce that the curvature and torsion of the geodesic $\gamma$ are given by
\begin{align}
\kappa_\gamma(s) &= -\sin\theta(s)\,t'(s)\,(\sin\varphi\,\kappa_\beta(t(s))+\sqrt{c}\,\cos\varphi),\label{kg}\\
\tau_\gamma(s) &= \cos\theta(s)\,t'(s)\,(\sin\varphi\,\kappa_\beta(t(s))+\sqrt{c}\,\cos\varphi).\label{tg}
\end{align}
We have proved the following result.
\begin{proposition}\label{intp1}
A unit speed curve $\gamma(s)=X\big(t(s),z(s)\big)$, with $\kappa_\gamma>0$, is a geodesic of the nontrivial concircular surface $M$ if and only if there is a differentiable function $\theta$ such that equations (\ref{p1.1}), (\ref{p1.2}) and (\ref{p1.3}) hold. Moreover, the curvature and torsion of $\gamma$ are given by (\ref{kg}) and (\ref{tg}), respectively.
\end{proposition}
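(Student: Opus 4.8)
The plan is to work throughout in the orthonormal frame $\{T_\beta,N_\beta,\eta\}$ adapted to $\beta$ and to express all the Frenet data of $\gamma$ in this frame. First I would differentiate the parametrization (\ref{teoparametrization}) and insert the Frenet--Darboux equations $T_\beta'=-\sqrt{c}\,\eta+\kappa_\beta N_\beta$, $N_\beta'=-\kappa_\beta T_\beta$, $\eta'=\sqrt{c}\,T_\beta$. A direct computation gives $X_t=g\,T_\beta$ with $g=1+z(-\cos\varphi\,\kappa_\beta+\sqrt{c}\,\sin\varphi)$, and $X_z=\cos\varphi\,N_\beta+\sin\varphi\,\eta$, so $\{T_\beta,X_z\}$ is an orthonormal basis of the tangent plane and $N=-\sin\varphi\,N_\beta+\cos\varphi\,\eta$ is a unit normal. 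Writing $\gamma'=g\,t'\,T_\beta+z'\,X_z$ and imposing $\norma{\gamma'}=1$ gives $(g\,t')^2+(z')^2=1$, which lets me introduce a differentiable angle $\theta$ by $g\,t'=\sin\theta$ and $z'=\cos\theta$; these are exactly (\ref{p1.1}) and (\ref{p1.2}), and they yield $T_\gamma=\sin\theta\,T_\beta+\cos\theta\,(\cos\varphi\,N_\beta+\sin\varphi\,\eta)$.

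Next I would differentiate $T_\gamma$ with respect to $s$, substitute the Frenet--Darboux relations (each $t$-derivative carrying a factor $t'$) and collect the result in the basis $\{T_\beta,N_\beta,\eta\}$. The curve $\gamma$ is a geodesic precisely when $T_\gamma'$ is parallel to $N$, equivalently when its tangential part vanishes; matching the $T_\beta$-, $N_\beta$- and $\eta$-components of $T_\gamma'$ against those of $\kappa_\gamma N_\gamma=\kappa_\gamma N$ reproduces the three scalar identities (\ref{eq1}), (\ref{eq2}) and (\ref{eq3}). On a proper concircular surface one has $\cos\theta\neq0$, so (\ref{eq1}) may be divided by $\cos\theta$ to obtain (\ref{p1.3}).

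To read off the curvature I would substitute (\ref{p1.3}) into (\ref{eq2}); using $1-\cos^2\varphi=\sin^2\varphi$ the bracket collapses to $\sin\varphi\,(\sin\varphi\,\kappa_\beta+\sqrt{c}\,\cos\varphi)$ and, after cancelling $\sin\varphi$, yields (\ref{kg}); the same substitution in (\ref{eq3}) returns the identical value of $\kappa_\gamma$, confirming consistency. For the torsion I would use $\tau_\gamma=\<N_\gamma',B_\gamma\>$. The decisive simplification is that $N_\gamma'=t'(\sin\varphi\,\kappa_\beta+\sqrt{c}\,\cos\varphi)\,T_\beta$ is proportional to $T_\beta$, while a short cross-product computation gives $B_\gamma=T_\gamma\times N_\gamma=\cos\theta\,T_\beta-\sin\theta\,\cos\varphi\,N_\beta-\sin\theta\,\sin\varphi\,\eta$; pairing these two and keeping the $T_\beta$-coefficient $\cos\theta$ gives (\ref{tg}). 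The converse follows by reversing the argument: assuming a function $\theta$ with (\ref{p1.1})--(\ref{p1.3}), the very expression for $T_\gamma'$ shows that (\ref{p1.3}) forces both tangential components to vanish, so $T_\gamma'$ is normal to $M$ and $\gamma$ is a geodesic. I expect the main obstacle to be the bookkeeping in the torsion step---organizing $N_\gamma'$ and the cross product $B_\gamma$ in the moving frame so that the proportionality to $T_\beta$ emerges cleanly---rather than any conceptual difficulty.
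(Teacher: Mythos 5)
Your proposal is correct and follows essentially the same route as the paper: decompose everything in the adapted frame $\{T_\beta,N_\beta,\eta\}$, use the unit-speed condition to introduce $\theta$ via (\ref{p1.1})--(\ref{p1.2}), match components of $T_\gamma'$ against $\kappa_\gamma N_\gamma$ to get (\ref{eq1})--(\ref{eq3}) and hence (\ref{p1.3}), and then extract (\ref{kg}) and (\ref{tg}). The only difference is that you carry out explicitly the frame computations the paper delegates to \cite{LO16a}, and your details (including the cross-product formula for $B_\gamma$ and the collapse of the bracket to $\sin\varphi(\sin\varphi\,\kappa_\beta+\sqrt{c}\cos\varphi)$) check out.
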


\begin{example}[Concircular helices ``of constant precession'']\label{ex0}
Let $\beta$ be a curve in a totally umbilical surface $S$ with curvature
\[
\kappa_\beta(t)=\frac{a}{\cos\varphi\sqrt{4-a^2t^2}}+\sqrt c\,\tan\varphi,\qquad t\in\Big(-\frac2a,\frac2a\Big),
\]
where $c\in\{0,1\}$ is the curvature of $S$, $a>0$ and $\varphi\in(0,\pi/2)$.
To find geodesics $\gamma$ of a concircular surface $M$ constructed on the curve $\beta$ (which will be concircular helices), we need to solve the ODE system given in Proposition \ref{intp1}. It is not difficult to check that a solution of (\ref{p1.1}), (\ref{p1.2}) and (\ref{p1.3}) is given by the following functions:
\begin{equation}\label{cpc}
\theta(s)=as,\qquad t(s)=-\frac{2}{a}\cos(a s),\qquad z(s)=\frac{1}{a}\sin(a s),
\end{equation}
for $s\in(0,\pi/a)$ (in general, $s\in(k\pi/a,(k+1)\pi/a)$ for a integer number $k$). In this case, the curvature and torsion of these concircular helices can be written by using (\ref{kg}) and (\ref{tg}) as follows,
\begin{align*}
\kappa_\gamma(s)&=-\sin{as}\left(a\tan\varphi+2\sqrt c\sec\varphi\,\sin(a s)\right),\\
\tau_\gamma(s)&=\cos{as}\left(a\tan\varphi+2\sqrt c\sec\varphi\,\sin(a s)\right).
\end{align*}
Hence, $\gamma$ is a curve of constant precession for $c=0$, \cite{Sco95}. In Figure \ref{fig.ex0} we show some examples for $c=1$, $\varphi=0.4\,\pi$, and several values of $a$.
\begin{figure}[htb]\centering
\def\ConcCurve#1;#2;#3;{%
 \includegraphics[width=.3\linewidth]{ConcCurve_a#1_f0.#2pi_s0#3.pdf}}
\def\suba#1#2{\raisebox{#1mm}{$\varphi=0.#2\pi$}}
\resizebox{\textwidth}{!}{%
\begin{tabular}{@{}c@{\kern10mm}c@{\kern10mm}c@{}}
\toprule
\ConcCurve1;4;90; & \ConcCurve5;4;40; & \ConcCurve100;4;0.5;\\
$a=1$ & $a=5$ & $a=100$\\
\bottomrule
\end{tabular}
}
\caption{\label{fig.ex0}Concircular helices ``of constant precession''}
\end{figure}
\end{example}

\subsection{A natural parametrization for the concircular surfaces}
\label{s:naturalpar}

Since the natural parametrization of a cylindrical surface, a conical surface or a helix surface is well known (see \cite{IT1, Chen03, LO16a}, respectively), let us consider the case when $M$ is a proper concircular surface.

Let $S\subset\R3$ be a sphere of radius $r$ centered at $p_0$. Take a unit speed curve $\beta:I\subset\R{}\to S\subset\R3$, an angle $\varphi\in(0,\pi/2)$, and consider the ruled surface $M$ parametrized by
\[
X(t,z)=\beta(t)+z\,\big(\cos\varphi\, N_\beta(t)+\sin\varphi\, \eta(t)\big).
\]
When $\kappa_\beta$ is constant (i.e. $\beta$ is a circle), $M$ is a cone and its striction line reduces to a single point (the vertex of the cone). Assume without loss of generality that $\kappa_\beta'\neq0$.

A straightforward computation shows that the striction line of $M$ is given by
\[
\alpha(t)=\beta(t)+\frac1{\cos\varphi\,\kappa_\beta(t)-\sqrt{c}\,\sin\varphi}\,\big(\cos\varphi\, N_\beta(t)+\sin\varphi\, \eta(t)\big).
\]
Hence the arclength parameter of $\alpha$ is
\[
u(t)=\frac{1}{\cos\varphi\,\kappa_\beta(t)-\sqrt{c}\,\sin\varphi},
\]
and the unit tangent vector field is given by
\(
T_\alpha(u(t))=\cos\varphi\, N_\beta(t)+\sin\varphi\, \eta(t).
\)
By taking derivative here we deduce $N_\alpha$ is parallel to $T_\beta$. Since $\<T_\beta(t),\beta(t)-p_0\>=0$ then
$\<N_\alpha(u(t)),\alpha(t)-p_0\>=0$, and this shows $\alpha$ is a rectifying curve.

A straightforward computation leads to
\[
\tau_\alpha(u(t))=-\kappa_\alpha(u(t))\big(\tan\varphi+\sqrt{c}\sec\varphi\,u(t)\big).
\]
Hence, a natural parametrization of $M$ is the following
\begin{equation}\label{parY2}
\Y(u,v)=\alpha(u)+(v-u)\,T_\alpha(u),\quad v\neq u,
\end{equation}
$\alpha$ being a rectifying curve. Since $\<B_\alpha,\alpha\>$ is constant for a rectifying curve and $B_\alpha$ is parallel to $N$, then the tangent surface $M$ to a rectifying curve $\alpha$ is a concircular surface.

Now, we will find the equations of their geodesic curves. Let $\gamma(s)=\Y(u(s),v(s))$ be a unit speed geodesic of the ruled surface $M$. Then
\[
T_\gamma(s)=u'(s)\,(v(s)-u(s))\,\kappa_\alpha(u(s))\,N_\alpha(u(s))+v'(s)\,T_\alpha(u(s)),
\]
so that there is a differentiable function $\omega$ such that
\begin{align*}
u'(s)\,(v(s)-u(s))\,\kappa_\alpha(u(s)) &= \sin\omega(s),\\
v'(s) &= \cos\omega(s).
\end{align*}
The function $\omega$ represents the angle between the geodesic $\gamma$ and the base curve $\alpha$. Bearing in mind the Frenet equations (\ref{FS-eq}) we obtain
\begin{align}
\kappa_\gamma(s)\,N_\gamma(s) &= -\sin\omega(s)\,\big(u'(s)\,\kappa_\alpha(u(s))+\omega'(s)\big)\,T_\alpha(u(s))\nonumber\\
&\kern10pt +\cos\omega(s)\,\big(u'(s)\,\kappa_\alpha(u(s))+\omega'(s)\big)\,N_\alpha(u(s))\nonumber\\
&\kern10pt +\sin\omega(s)\,u'(s)\,\tau_\alpha(u(s))\,B_\alpha(u(s)).\label{NgBa-}
\end{align}
Without loss of generality we can assume that
$N_\gamma(s)=N(u(s),v(s))=-B_\alpha(u(s))$,
that jointly with (\ref{NgBa-}) leads to
\(
\omega'(s) = -u'(s)\,\kappa_\alpha(u(s)).
\)
This yields the following equations for the curvature and torsion of the geodesic:
\begin{align}
\kappa_\gamma(s) &= -\sin\omega(s)\,u'(s)\,\tau_\alpha(u(s))= \sin\omega(s)\,\omega'(s)\,(\tau_\alpha/\kappa_\alpha)(u(s)),\label{kg2}\\
\tau_\gamma(s) &= \cos\omega(s)\,u'(s)\,\tau_\alpha(u(s))= -\cos\omega(s)\,\omega'(s)\,(\tau_\alpha/\kappa_\alpha)(u(s)).\label{tg2}
\end{align}
Hence, we have shown the following result.
\begin{proposition}\label{p3}
Let $M$ be a proper concircular surface parametrized by (\ref{parY2}). An arclength parametrized curve $\gamma(s)=\Y(u(s),v(s))$, with $\kappa_\gamma>0$, is a geodesic of $M$ if and only if there is a differentiable function $\omega$ such that
\begin{align}
u'(s)\,(v(s)-u(s))\,\kappa_\alpha(u(s)) &= \sin\omega(s),\label{p2.1}\\
v'(s) &= \cos\omega(s),\label{p2.2}\\
u'(s)\,\kappa_\alpha(u(s)) &= -\omega'(s).\label{p2.3}
\end{align}
Moreover, the curvature and torsion of $\gamma$ are given by (\ref{kg2}) and (\ref{tg2}), respectively.
\end{proposition}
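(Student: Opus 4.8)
The plan is to run the moving-frame computation directly, exploiting that the parametrization $\Y(u,v)$ in (\ref{parY2}) is the tangent developable of $\alpha$, so that everything is controlled by the Frenet frame $\{T_\alpha,N_\alpha,B_\alpha\}$ and the Frenet equations (\ref{FS-eq}). First I would compute the coordinate vector fields: differentiating (\ref{parY2}) and using $\alpha'=T_\alpha$ together with $T_\alpha'=\kappa_\alpha N_\alpha$, the term $\partial_u(v-u)\,T_\alpha=-T_\alpha$ cancels $\alpha'=T_\alpha$, leaving $\Y_u=(v-u)\kappa_\alpha N_\alpha$ and $\Y_v=T_\alpha$. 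Hence the tangent plane of $M$ is spanned by $\{T_\alpha,N_\alpha\}$, and since $N_\alpha\times T_\alpha=-B_\alpha$ the unit normal is $N=\pm B_\alpha$; as in the text I would fix the orientation so that $N=-B_\alpha$ along $\gamma$, which is legitimate because $\kappa_\gamma>0$ lets us take $N_\gamma=N$.

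Next I would parametrize the tangent direction. Writing $T_\gamma=\gamma'(s)=\Y_u u'+\Y_v v'=u'(v-u)\kappa_\alpha\,N_\alpha+v'\,T_\alpha$ and imposing $|T_\gamma|=1$ gives $\big(u'(v-u)\kappa_\alpha\big)^2+(v')^2=1$, so there is a function $\omega$ with $u'(v-u)\kappa_\alpha=\sin\omega$ and $v'=\cos\omega$; these are exactly (\ref{p2.1})--(\ref{p2.2}). I would then differentiate $T_\gamma=\sin\omega\,N_\alpha+\cos\omega\,T_\alpha$ in $s$, using $\tfrac{d}{ds}N_\alpha=u'(-\kappa_\alpha T_\alpha+\tau_\alpha B_\alpha)$ and $\tfrac{d}{ds}T_\alpha=u'\kappa_\alpha N_\alpha$. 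Collecting coefficients of $T_\alpha$, $N_\alpha$ and $B_\alpha$ reproduces (\ref{NgBa-}): the two tangential coefficients are $-\sin\omega$ and $\cos\omega$ times $(\omega'+u'\kappa_\alpha)$, while the $B_\alpha$-coefficient is $\sin\omega\,u'\tau_\alpha$.

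To extract the geodesic condition I would use that $\gamma$ is a geodesic if and only if $T_\gamma'=\kappa_\gamma N_\gamma$ is normal to $M$, i.e. parallel to $B_\alpha$. Vanishing of the $T_\alpha$- and $N_\alpha$-components in (\ref{NgBa-}) then forces $\omega'+u'\kappa_\alpha=0$, which is (\ref{p2.3}). With this relation the surviving term reads $\kappa_\gamma N_\gamma=\sin\omega\,u'\tau_\alpha\,B_\alpha$; since $N_\gamma=-B_\alpha$ this gives $\kappa_\gamma=-\sin\omega\,u'\tau_\alpha$, and substituting $u'\kappa_\alpha=-\omega'$ yields (\ref{kg2}). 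For the torsion I would differentiate $N_\gamma=-B_\alpha$ via $\tfrac{d}{ds}B_\alpha=-u'\tau_\alpha N_\alpha$, compute $B_\gamma=T_\gamma\times N_\gamma=-\sin\omega\,T_\alpha+\cos\omega\,N_\alpha$, and compare with $N_\gamma'=-\kappa_\gamma T_\gamma+\tau_\gamma B_\gamma$; the $T_\alpha$-component gives $\tau_\gamma=-\kappa_\gamma\cot\omega=\cos\omega\,u'\tau_\alpha$, which is (\ref{tg2}).

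I expect no serious obstacle, since the whole argument is a reversible computation in the Frenet frame of $\alpha$. The two points needing care are the sign and orientation conventions (the choice $N=-B_\alpha$, i.e. the ``without loss of generality'' in the text) and the observation that the single scalar relation (\ref{p2.3}) genuinely captures the full geodesic condition: both tangential coefficients in (\ref{NgBa-}) share the factor $\omega'+u'\kappa_\alpha$ and $\sin\omega$, $\cos\omega$ never vanish simultaneously, so their joint vanishing is equivalent to (\ref{p2.3}) alone. The converse is then immediate, because the forward computation is step-by-step reversible: given $u,v,\omega$ satisfying (\ref{p2.1})--(\ref{p2.3}), equations (\ref{p2.1})--(\ref{p2.2}) make $\gamma$ unit speed and (\ref{p2.3}) makes $T_\gamma'$ a multiple of $B_\alpha$, hence $\gamma$ is a geodesic.
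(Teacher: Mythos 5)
Your proposal is correct and follows essentially the same route as the paper: the Frenet-frame computation of $T_\gamma$ on the tangent developable, the decomposition $T_\gamma=\cos\omega\,T_\alpha+\sin\omega\,N_\alpha$, the derivation of (\ref{NgBa-}), and the extraction of (\ref{p2.3}) and of (\ref{kg2})--(\ref{tg2}) from the choice $N_\gamma=-B_\alpha$. Your explicit remark that both tangential coefficients share the factor $\omega'+u'\kappa_\alpha$ (so that (\ref{p2.3}) alone captures the geodesic condition) and your note on reversibility for the converse are slightly more detailed than the paper's exposition, but the argument is the same.
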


\begin{theorem}\label{th1}
Let $\gamma(s)$ be an arclength parametrized curve fully immersed in $\R3$. If $\gamma$ is a proper concircular helix, then there exists a rectifying curve $\alpha$ such that $\gamma$ is (congruent to) a geodesic of the tangent surface to $\alpha$.
\end{theorem}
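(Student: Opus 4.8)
The plan is to reduce the statement to a matching of Frenet invariants via the fundamental theorem of space curves: it suffices to produce a rectifying curve $\alpha$ and a geodesic $\tilde\gamma$ of its tangent surface $\Y(u,v)=\alpha(u)+(v-u)\,T_\alpha(u)$ such that $\kappa_{\tilde\gamma}=\kappa_\gamma$ and $\tau_{\tilde\gamma}=\tau_\gamma$ as functions of the common arclength $s$; then $\gamma$ and $\tilde\gamma$ are congruent. By Proposition \ref{p3}, such a geodesic has curvature and torsion given by (\ref{kg2}) and (\ref{tg2}), whence $\tau_{\tilde\gamma}/\kappa_{\tilde\gamma}=-\cot\omega$. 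So the first step is to read off $\omega$ from the given helix by setting $\rho=-\cot\omega$, i.e. $\omega=\arccot(-\rho)$; this gives $\sin^2\omega=1/(1+\rho^2)$ and $\omega'=\rho'/(1+\rho^2)$, which is nonzero because $\rho'\neq0$.

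Next I would reconstruct the remaining data. Imposing $\kappa_{\tilde\gamma}=\kappa_\gamma$ in (\ref{kg2}) forces
\[
(\tau_\alpha/\kappa_\alpha)(u(s))=\frac{\kappa_\gamma}{\sin\omega\,\omega'}=\frac{\kappa_\gamma(1+\rho^2)^{3/2}}{\rho'}=:G(s),
\]
so that $G$ is exactly the reciprocal of the slant-helix quantity $f=\rho'/(\kappa_\gamma(1+\rho^2)^{3/2})$ appearing on the left of (\ref{conc4}). For $\alpha$ to be rectifying I ask that its ratio $\tau_\alpha/\kappa_\alpha$ be affine in its own arclength, $(\tau_\alpha/\kappa_\alpha)(u)=c_1u+c_0$ with $c_1\neq0$; comparing with the display above defines the reparametrization $u(s)=(G(s)-c_0)/c_1$. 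Equation (\ref{p2.3}) then defines $\kappa_\alpha$ along the curve by $\kappa_\alpha(u(s))=-\omega'(s)/u'(s)$, and (\ref{p2.1}) together with (\ref{p2.3}) forces $v(s)=u(s)-\sin\omega/\omega'$. A curve $\alpha$ with this curvature and with torsion $\tau_\alpha=(c_1u+c_0)\kappa_\alpha$ exists by the fundamental theorem of curves, and is rectifying by Chen's characterization since $c_1\neq0$.

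The crux --- and the step I expect to be the main obstacle --- is to check that the only remaining equation, (\ref{p2.2}) $v'=\cos\omega$, is compatible with all the previous choices, and to see that this is exactly what the concircular-helix equation (\ref{conc4}) provides. Differentiating $v=u-\sin\omega/\omega'$ and demanding $v'=\cos\omega$ reduces to $u'=\cos\omega+(\sin\omega/\omega')'$; a direct computation using $\sin\omega/\omega'=\sqrt{1+\rho^2}/\rho'$ gives
\[
\cos\omega+\left(\frac{\sin\omega}{\omega'}\right)'=-\frac{\sqrt{1+\rho^2}\,\rho''}{\rho'^2}.
\]
On the other hand, since $G=1/f$, equation (\ref{conc4}) yields $G'=-f'/f^2=-m\sqrt{1+\rho^2}\,\rho''/\rho'^2$, so that $u'=G'/c_1=(-m/c_1)\sqrt{1+\rho^2}\,\rho''/\rho'^2$. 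Comparing the two expressions, (\ref{p2.2}) holds for every $s$ precisely when $c_1=m$. Equivalently, one may run the computation from the form (\ref{conc3}), where the two terms involving $\rho\kappa_\gamma$ cancel and leave the same conclusion. I would therefore fix the slope of the rectifying curve to be $c_1=m$, with $c_0$ an arbitrary constant.

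Finally I would tidy the regularity and sign issues. Since $m\neq0$, the relation $G'=-m\sqrt{1+\rho^2}\,\rho''/\rho'^2$ shows that $u'=G'/m$ vanishes only where $\rho''=0$; as $\gamma$ is a \emph{proper} concircular helix it is not a rectifying curve, so $\rho$ is not affine and $\rho''\not\equiv0$, giving an interval on which $u(s)$ is a genuine diffeomorphism and $\alpha$ may be parametrized by its arclength $u$. Choosing the sign of $\sin\omega$ (equivalently the orientation of $\alpha$ and of $N_\alpha$) so that $\kappa_\alpha>0$, Proposition \ref{p3} guarantees that $\tilde\gamma(s)=\Y(u(s),v(s))$ is a unit-speed geodesic of the tangent surface to the rectifying curve $\alpha$, with $\kappa_{\tilde\gamma}=\kappa_\gamma$ and $\tau_{\tilde\gamma}=\tau_\gamma$. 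The fundamental theorem of space curves then gives that $\gamma$ is congruent to $\tilde\gamma$, completing the proof.
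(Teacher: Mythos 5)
Your proposal is correct and follows essentially the same route as the paper's proof: you set $\omega$ via $\rho=-\cot\omega$, take $u$ affine in $\kappa_\gamma(1+\rho^2)^{3/2}/\rho'$ with slope forced to be $m$, put $v=u-\sin\omega/\omega'$, and observe that the one nontrivial compatibility, $v'=\cos\omega$, is exactly the concircular-helix equation (\ref{conc4}) — which is precisely the paper's construction (\ref{w teo})--(\ref{v teo}) up to a branch/sign convention. Your closing remarks on where $u$ is a genuine diffeomorphism and on orienting $\alpha$ so that $\kappa_\alpha>0$ are minor regularity points the paper leaves implicit.
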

\begin{proof}
Let us consider the following functions:
\begin{align}
\omega(s)= & -\arccot\rho(s), \label{w teo}\\
u(s)= & -\frac{1}{m}\left(\kappa_\gamma(s)\,\frac{(1+\rho(s)^2)^{3/2}}{\rho'(s)}+n\right), \label{u teo}\\
v(s)= & u(s)-\frac{\sin\omega(s)}{\omega'(s)}, \label{v teo}
\end{align}
for a constant $n$. Here, $m$ and $\rho(s)$ are given in Theorem \ref{teoconc1}.

Let $\alpha$ be the rectifying curve determined by the curvature function
\begin{equation}\label{teo kalpha}
  \kappa_\alpha(t)=-\frac{\omega'(u^{-1}(t))}{u'(u^{-1}(t))}
\end{equation}
and whose torsion satisfies $(\tau_\alpha/\kappa_\alpha)(u)=m\,u+n$.

On the tangent surface to $\alpha$, let us take the curve $\tgamma(s)=\Y(u(s),v(s))$. From (\ref{teo kalpha}) we get (\ref{p2.3}), and then equation (\ref{v teo}) leads to (\ref{p2.1}).

By derivating (\ref{u teo}) and using (\ref{conc4}) we obtain
\[
u'(s)=\sqrt{1+\rho(s)^2}\,\frac{\rho''(s)}{\rho'(s)^2},
\]
and then we get
\[
v'(s)=\frac{\rho''(s)\sqrt{1+\rho(s)^2}}{\rho'(s)^2}+ \left(\frac{\sqrt{1+\rho(s)^2}}{\rho'(s)}\right)'
   =\frac{\rho(s)}{\sqrt{1+\rho^2(s)}}=\cos\omega(s).
\]
In conclusion, by Proposition \ref{p3} we get $\tgamma$ is a geodesic in the tangent surface to a rectifying curve, so it is a concircular helix.

On the other hand, we have
\begin{equation*}
  \frac{\tau_\alpha}{\kappa_\alpha}(u(s))=m\,u(s)+n=-\kappa_\gamma(s)\,\frac{(1+\rho(s)^2)^{3/2}}{\rho'(s)}.
\end{equation*}
Since the curvature and torsion of $\tilde\gamma$ satisfy (\ref{kg2}) y (\ref{tg2}), respectively, we get
\begin{align*}
  \kappa_{\tgamma}(s)= &  \frac{-1}{\sqrt{1+\rho(s)^2}}\,\frac{\rho'(s)}{1+\rho(s)^2}\,\frac{-(1+\rho(s)^2)^{3/2}}{\rho'(s)}\,\kappa_\gamma(s)=\kappa_\gamma(s),\\
  \tau_{\tgamma}(s)= &  -\frac{\rho(s)}{\sqrt{1+\rho(s)^2}}\,\frac{\rho'(s)}{1+\rho(s)^2}\,\frac{-(1+\rho(s)^2)^{3/2}}{\rho'(s)}\,\kappa_\gamma(s)=\tau_\gamma(s),
\end{align*}
which shows that $\gamma$ and $\tgamma$ are congruent curves.
\end{proof}

\acknowledgment{This research is part of the grant PID2021-124157NB-I00, funded by MCIN/ AEI/ 10.13039/ 501100011033/ ``ERDF A way of making Europe''.} 

\end{document}